\numberwithin{equation}{section}
\newtheorem{lemma}{Lemma}[section]
\theoremstyle{definition}
\theoremstyle{remark}
\newcommand{\Rnum}[1]{\uppercase\expandafter{\romannumeral #1\relax}}
\newcommand{\mr}[1]{\mathrm{#1}}
\newcommand{\mb}[1]{\mathbb{#1}}
\newcommand{\mc}[1]{\mathcal{#1}}
\newcommand{\vsal}{\\\noalign{\vspace{0.2cm}}}
\DeclareMathOperator{\Av}{Av} 
\def\clap#1{\hbox to 0pt{\hss#1\hss}}
\title{On a constant related to the Bellman function of three integral variables of the dyadic maximal operator: Part A}
\author{Eleftherios N. Nikolidakis}
\date{\today}
\begin{document}
\maketitle

\begin{abstract}
We study the behaviour of the constant that is provided in the articles \cite{12} and \cite{13}, which is connected with the determination of the Bellman function of three integral variables of the dyadic maximal operator. More precisely we study the monotonicity properties of this constant with respect to the second variable from which it depends. 
\end{abstract}

\section{Introduction} \label{sec:0}
The dyadic maximal operator on $\mb R^n$ is a useful tool in analysis and is defined by
\begin{equation} \label{eq:0p1}
	\mc M_d\varphi(x) = \sup\left\{ \frac{1}{|S|} \int_S |\varphi(u)|\,\mr du: x\in S,\ S\subseteq \mb R^n\ \text{is a dyadic cube} \right\},
\end{equation}
for every $\varphi\in L^1_\text{loc}(\mb R^n)$, where $|\cdot|$ denotes the Lebesgue measure on $\mb R^n$, and the dyadic cubes are those formed by the grids $2^{-N}\mb Z^n$, for $N=0, 1, 2, \ldots$.\\
It is well known that it satisfies the following weak type (1,1) inequality
\begin{equation} \label{eq:0p2}
	\left|\left\{ x\in\mb R^n: \mc M_d\varphi(x) > \lambda \right\}\right| \leq \frac{1}{\lambda} \int_{\left\{\mc M_d\varphi > \lambda\right\}} |\varphi(u)|\,\mr du,
\end{equation}
for every $\varphi\in L^1(\mb R^n)$, and every $\lambda>0$,
from which it is easy to get the following  $L^p$-inequality
\begin{equation} \label{eq:0p3}
	\|\mc M_d\varphi\|_p \leq \frac{p}{p-1} \|\varphi\|_p,
\end{equation}
for every $p>1$, and every $\varphi\in L^p(\mb R^n)$.
It is easy to see that the weak type inequality \eqref{eq:0p2} is the best possible. For refinements of this inequality one can consult \cite{6}.

It has also been proved that \eqref{eq:0p3} is best possible (see \cite{1} and \cite{2} for general martingales and \cite{21} for dyadic ones).
An approach for studying the behaviour of this maximal operator in more depth is the introduction of the so-called Bellman functions which play the role of generalized norms of $\mc M_d$. Such functions related to the $L^p$-inequality \eqref{eq:0p3} have been precisely identified in \cite{4}, \cite{5} and \cite{14}. For the study of the Bellman functions of $\mc M_d$, we use the notation $\Av_E(\psi)=\frac{1}{|E|} \int_E \psi$, whenever $E$ is a Lebesgue measurable subset of $\mb R^n$ of positive measure and $\psi$ is a real valued measurable function defined on $E$. We fix a dyadic cube  $Q$ and define the localized maximal operator $\mc M'_d\varphi$ as in \eqref{eq:0p1} but with the dyadic cubes $S$ being assumed to be contained in $Q$. Then for every $p>1$ we let
\begin{equation} \label{eq:0p4}
	B_p(f,F)=\sup\left\{ \frac{1}{|Q|} \int_Q (\mc M'_d\varphi)^p: \Av_Q(\varphi)=f,\ \Av_Q(\varphi^p)=F \right\},
\end{equation}
where $\varphi$ is nonnegative in $L^p(Q)$ and the variables $f, F$ satisfy $0<f^p\leq F$. By a scaling argument it is easy to see that \eqref{eq:0p4} is independent of the choice of $Q$ (so we may choose
$Q$ to be the unit cube $[0,1]^n$).
In \cite{5}, the function \eqref{eq:0p4} has been precisely identified for the first time. The proof has been given in a much more general setting of tree-like structures on probability spaces.

More precisely we consider a non-atomic probability space $(X,\mu)$ and let $\mc T$ be a family of measurable subsets of $X$, that has a tree-like structure similar to the one in the dyadic case (the exact definition can be seen in \cite{5}).
Then we define the dyadic maximal operator associated to $\mc T$, by
\begin{equation} \label{eq:0p5}
	\mc M_{\mc T}\varphi(x) = \sup \left\{ \frac{1}{\mu(I)} \int_I |\varphi|\,\mr \; d\mu: x\in I\in \mc T \right\},
\end{equation}
for every $\varphi\in L^1(X,\mu)$, $x\in X$.

This operator is related to the theory of martingales and satisfies essentially the same inequalities as $\mc M_d$ does. Now we define the corresponding Bellman function of four variables of $\mc M_{\mc T}$, by
\begin{multline} \label{eq:0p6}
	B_p^{\mc T}(f,F,L,k) = \sup \left\{ \int_K \left[ \max(\mc M_{\mc T}\varphi, L)\right]^p\mr \; d\mu: \varphi\geq 0, \int_X\varphi\,\mr \; d\mu=f, \right. \\  \left. \int_X\varphi^p\,\mr \; d\mu = F,\ K\subseteq X\ \text{measurable with}\ \mu(K)=k\right\},
\end{multline}
the variables $f, F, L, k$ satisfying $0<f^p\leq F $, $L\geq f$, $k\in (0,1]$.
The exact evaluation of \eqref{eq:0p6} is given in \cite{5}, for the cases where $k=1$ or $L=f$. In the first case, the author (in \cite{5}) precisely identifies the function $B_p^{\mc T}(f,F,L,1)$ by evaluating it in a first stage for the case where $L=f$. That is he precisely identifies $B_p^{\mc T}(f,F,f,1)$ (in fact $B_p^{\mc T}(f,F,f,1)=F \omega_p (\frac{f^p}{F})^p$, where                         $\omega_p: [0,1] \to [1,\frac{p}{p-1}]$ is the inverse function $H^{-1}_p$, of $H_p(z) = -(p-1)z^p + pz^{p-1}$). 

The proof of the above mentioned evaluation relies on a one-parameter integral inequality which is proved by arguments based on a linearization of the dyadic maximal operator. More precisely the author in \cite{5} proves that the inequality

\begin{equation}\label{eq:0p7}
	F\geq \frac{1}{(\beta+1)^{p-1}} f^p + \frac{(p-1)\beta}{(\beta+1)^p} \int_X (M_{\mathcal{T}}\varphi)^p \; d\mu,
\end{equation}
is true for every non-negative value of the parameter $\beta$ and sharp for one that depends on $f$, $F$ and $p$, namely for $\beta=\omega_p (\frac{f^p}{F})-1$. This gives as a consequence an upper bound for $B_p^{\mc T}(f,F,f,1)$, which after several technical considerations is proved to be best possible.Then  by using several calculus arguments the author in \cite{5} provides the evaluation of $B_p^{\mc T}(f,F,L,1)$ for every $L\geq f$. 

Now in \cite{14} the authors give a direct proof of the evaluation of $B_p^{\mc T}(f,F,L,1)$ by using alternative methods. Moreover in the second case, where $L=f$, the author (in \cite{5}) uses the evaluation of $B_p^{\mc T}(f,F,f,1)$ and provides the evaluation of the more general $B_p^{\mc T}(f,F,f,k)$, $k\in (0,1]$.

Our aim in this article is to study further the results of \cite{12} and \cite{13} in order to approach the following Bellman function problem (of three integral variables)

\begin{multline} \label{eq:0p8}
	B_{p,q}^{\mc T}(f,A,F) = \sup \left\{ \int_X \left(\mc M_{\mc T}\varphi\right)^p\mr \; d\mu: \varphi\geq 0, \int_X\varphi\,\mr \; d\mu=f, \right. \\  \left. \int_X\varphi^q\,\mr \; d\mu = A,\ \int_X\varphi^p\,\mr \; d\mu = F\right\},
\end{multline}
where $1<q<p$, and the variables $f,A,F$ lie in the domain of definition of the above problem. Certain progress for the above problem can be seen in \cite{11}.

In \cite{12} it is proved that whenever $0<\frac{x^q}{\kappa^{q-1}}<y\leq x^{\frac{p-q}{p-1}}\cdot z^{\frac{q-1}{p-1}}\;\Leftrightarrow\; 0<s_1^{\frac{q-1}{p-1}}\leq s_2<1$, (where $s_1,s_2$ are defined right below), there is a constant $t=t(s_1,s_2)$ for which if $h:(0,\kappa]\longrightarrow\mb{R}^{+}$ satisfies  $\int_{0}^{\kappa}h=x$\,,\; $\int_{0}^{\kappa}h^q=y$ and $\int_{0}^{\kappa}h^p=z$ then

	$$\int_{0}^{\kappa}\bigg(\frac{1}{t}\int_{0}^{t}h\bigg)^p dt\leq t^p(s_1,s_2)\cdot\int_{0}^{\kappa}h^p.$$

More precisely $t(s_1,s_2)=t$ is the greatest element of $\big[{1,t(0)}\big]$ for which $F_{s_1,s_2}(t)\leq 0$ where $F_{s_1,s_2}$ is defined in \cite{12}. Moreover for each such fixed $s_1,s_2$ 
\[t=t(s_1,s_2)=\min\Big\{{t(\beta)\;:\;\beta\in\big[0,\tfrac{1}{p-1}\big]}\Big\}\]
where $t(\beta)=t(\beta,s_1,s_2)$ is defined in \cite{12}. That is we find a constant $t=t(s_1,s_2)$ for which the above inequality  is satisfied for all $h:(0,\kappa]\longrightarrow\mb{R}^{+}$ as mentioned above. Note that $s_1,s_2$ depend by a certain way on $x,y,z$, namely $s_1=\frac{x^p}{\kappa^{p-1}z}$,  $s_2=\frac{x^q}{\kappa^{q-1}y}$ and $F_{s_1,s_2}(\cdot)$ is given in terms of $s_1,s_2$.

In this article we study the monotonicity behaviour of the function 
$t(s_1,s_2)$ with respect to the second variable $s_2$. This may enable us in the future to determine \eqref{eq:0p8}, by using also the results in \cite{11}, \cite{12} and \cite{13}.

We need to mention that the extremizers for the standard Bellman function $B_p^{\mc T}(f,F,f,1)$ have been studied in \cite{7}, and in \cite{9} for the case $0<p<1$. Also in \cite{8} the extremal sequences of functions for the respective Hardy operator problem have been studied.  Additionally further study of the dyadic maximal operator can be seen in \cite{10,14} where symmetrization principles for this operator are presented, while other approaches for the determination of certain Bellman functions are given in \cite{16,17,18,19,20}. Moreover results related to applications of dyadic maximal operators can be seen in \cite{15}.

\bigskip

\section{The monotonicity properties of $t(s_1,s_2)$ with respect to the variable $s_2$} \label{sec:2}

As we already seen in \cite{12}, we have defined a function $t(s_1,s_2)$ for all $({s_1,s_2})$ satisfying $0<s_1^{q-1}\leq s_2^{p-1}<1$ by the following manner:
\begin{enumerate}
\item [a)]
If $t'_{s_1,s_2}(0)\leqslant0$, then $t(s_1,s_2)=t$ is the unique solution in the interval $\big[{1,t(0)}\big]$, of the equation
\[q\,\big({p\,\omega_q(\tau)^{q-1}-(p-1)\,\omega_q(\tau)^q}\big)\Big({t^{p-q}-\frac{s_1}{s_2}}\Big)=(p-q)\,s_1\,\alpha(s_2)\,,\]
where 
\[\alpha(s_2):=\frac{\omega_q(s_2)^q}{s_2}-1\,,\quad \tau({s_1,s_2,t})=\tau=\frac{p-q}{p}\,\frac{t^p-s_1}{t^{p-q}-\frac{s_1}{s_2}}\,,\]
where also $t(0)=t_{s_1,s_2}(0)$ satisfies:
\[t^{p}({0})-\frac{p}{p-q}\,t^{p-q}(0)=s_1-\frac{p}{p-q}\,\frac{s_1}{s_2}=:h(s_1,s_2)\,.\]
\item [b)]
If $t'_{s_1,s_2}(0)>0$, then $t(s_1,s_2)=t$ is given by $t=t(0)$ as described right above.
\end{enumerate}

It is obvious that $t(s_1,s_2)$ that is defined on 
\[D=\Big\{{(s_1,s_2)\in \mb{R}^2\;:\; \text{with}\;0<s_1^{q-1}\leq s_2^{p-1}<1 }\Big\}\]
is continuous. Note  also that $t'_{s_1,s_2}(0)<0\;\Leftrightarrow\; h^{-1}\big({\frac{1}{s_1^q}}\big)<s_2<1$, for every $(s_1,s_2)\in D$, that is the case a) above holds iff $h^{-1}\big({\frac{1}{s_1^q}}\big)<s_2<1$ where $h(s_2)$ is defined in \cite{13}.

Additionally each point  $(s_1,s_2)\in D$ arises by considering a triple of points $(x,y,z)$ which satisfy the relations $\frac{x^q}{\kappa^{q-1}}<y\leq x^{\frac{p-q}{p-1}}\cdot  z^{\frac{q-1}{p-1}}$, and then by defining  $s_1=\frac{x^p}{\kappa^{p-1}z}$, $s_2=\frac{x^q}{\kappa^{q-1}y}$. 

As we have seen in \cite{13}, when $(s_1,s_2)\in D$ satisfies $t'_{s_1,s_2}(0)>0\;\Leftrightarrow\; s_1^{\frac{q-1}{p-1}}\leq s_2<h^{-1}\big({\frac{1}{s_1^q}}\big)$, then $t(s_1,s_2)=t_{s_1,s_2}(0)=t(0)$ satisfies $\frac{\partial t(0)}{\partial s_2}>0$, that is whenever $s_1\in({0,1})$ is such that $s_1^{\frac{q-1}{p-1}}<h^{-1}\big({\frac{1}{s_1^q}}\big)$ the function $t(s_1,s_2)$ is strictly increasing on the interval $\Big[{s_1^{\frac{q-1}{p-1}},h^{-1}\big({\frac{1}{s_1^q}}\big)}\Big]$. We continue in this way, by studying the behavior of $t(s_1,s_2)$, for each fixed $s_1\in (0,1)$, where $s_2\in \Big[{\max\Big({s_1^{\frac{q-1}{p-1}},h^{-1}\big({\frac{1}{s_1^q}}\big)}\Big),1}\Big]$, where $t({s_1,s_2})$ is given by the description right above (see case a)\,).

More precisely we shall prove that 
\begin{align}
\left.{\begin{array}{ll}
\dfrac{\partial t}{\partial s_2}>0\,,& \forall\,s_2\in \Big[{\max\Big({s_1^{\frac{q-1}{p-1}},h^{-1}\big({\frac{1}{s_1^q}}\big)}\Big),s'_2}\Big)\vsal
\dfrac{\partial t}{\partial s_2}<0\,,&\forall\,s_2\in({s'_2,1})
\end{array}}\right\}\label{eq:2p1}
\end{align}
where $s'_2$ is defined by $s'_2=H_q\big({\omega_p(s_1)}\big)$. 

Note that  for this choice of $s'_2$ we have that $s'_2>s_1^{\frac{q-1}{p-1}}$. Indeed this last mentioned inequality is equivalent to $H_q\big({\omega_p(s_1)}\big)>s_1^{\frac{q-1}{p-1}}\;\Leftrightarrow\; \omega_q\big({s_1^{\frac{q-1}{p-1}}}\big)>\omega_p(s_1)$,\; $s_1\in ({0,1})$, which is true in view of the results in \cite{13} (see Lemma 2.6).

Moreover $s'_2=H_q\big({\omega_p(s_1)}\big)$, satisfies $s'_2>h^{-1}\big({\frac{1}{s_1^q}}\big)$, thus $t_{s_1,s'_2}(0)<0$ (see \cite{13}). 

Note also that \eqref{eq:2p1} means exactly that 
\begin{align}
\left.{\begin{array}{ll}
\dfrac{\partial t}{\partial y}<0\,,& \forall\,y\in \bigg({y_0,y':=\min\bigg\{{x^{\frac{p-q}{p-1}} z^{\frac{q-1}{p-1}},\frac{x^q}{\kappa^{q-1}h^{-1}\big({\frac{1}{s_1^q}}\big)}}\bigg\}}\bigg)\vsal
\dfrac{\partial t}{\partial y}>0\,,&\forall\,y\in\big({\frac{x^q}{\kappa^{q-1}},y_0}\big)
\end{array}}\right\}\label{eq:2p2}
\end{align}
where $y_0$ is defined by the equation $y_0=\frac{x^q}{\kappa^{q-1}H_q\big({\omega_p\big({\frac{x^p}{\kappa^{p-1}z}}\big)}\big)}$. This is true in view of the observation which states that if we define  $\lambda({x,y,z})=t({s_1,s_2})$, with $s_1=\frac{x^p}{\kappa^{p-1}z}$, and $s_2=s_2(y)=\frac{x^q}{\kappa^{q-1}y}$ then we obviously have that 
\begin{align*}
\frac{\partial \lambda}{\partial y}=\frac{\partial t}{\partial s_2}\cdot s'_2(y)=\frac{\partial t}{\partial s_2}\cdot \Big({-\frac{x^q}{\kappa^{q-1}y^2}}\Big)\,.
\end{align*}
Note that 
\begin{align}
y_0=\frac{x^q}{\kappa^{q-1}H_q\big({\omega_p\big({\frac{x^p}{\kappa^{p-1}z}}\big)}\big)}\label{eq:2p3}
\end{align}
satisfies 
\begin{align}
\frac{x^q}{\kappa^{q-1}}<y_0\leq x^{\frac{p-q}{p-1}}\cdot z^{\frac{q-1}{p-1}}\,.\label{eq:2p4}
\end{align}
\eqref{eq:2p4} is obviously true since $s_1=\frac{x^p}{\kappa^{p-1}z}<1$ and $s'_2=H_q\big({\omega_p({s_1})}\big)>s_1^{\frac{q-1}{p-1}}$ (see Lemma 2.6 in \cite{13}). At this point we mention that the equation $s'_2=H_q\big({\omega_p({s_1})}\big)$, where  $s'_2=\frac{x^q}{\kappa^{q-1}y}$ for some $y$, states that $y$ must be equal to $y_0$, which is defined by \eqref{eq:2p3}. We remind also that if $\alpha({s_2})=\frac{\omega_q({s_2})^q}{s_2}-1$, then (see Lemma 3.2 in \cite{13}) $\alpha'({s_2})$ is given by 
\begin{align*}
\alpha'({s_2})=\frac{1}{s_2}\bigg[{-\frac{\omega_q(s_2)}{(q-1)({\omega_q}(s_2)-1)}-\frac{\omega_q({s_2})^q}{s_2}}\bigg],\quad s_2\in({0,1})\,.
\end{align*}

In order to prove \eqref{eq:2p1} we must use the definition of $t(s_1,s_2)$ which is given for $({s_1,s_2})\in D$ for which $\max\big({s_1^{\frac{q-1}{p-1}},h^{-1}\big({\frac{1}{s_1^q}}\big)}\big)<s_2<1$. For those $({s_1,s_2})$, and from the results of \cite{13}, the following identity should be true: 

$$\Gamma({s_1,s_2})=\Delta({s_1,s_2}),$$ where $\Gamma$, $\Delta$ are defined as in the beginning of this Section: 
\begin{align*}
\Gamma({s_1,s_2})&:=q\big({p\,\omega_q(\tau)^{q-1}-({p-1})\,\omega_q({\tau})^q}\big)\Big({t^{p-q}-\frac{s_1}{s_2}}\Big)\,,\vsal
\Delta({s_1,s_2})&:=({p-q})\,s_1\cdot\alpha(s_2)\,.
\end{align*}
We now evaluate $\frac{\partial \Gamma}{\partial s_2}$. We have 
\begin{align}
\frac{\partial \Gamma}{\partial s_2}&=q\bigg[{p({q-1})\,\omega_q(\tau)^{q-2}\frac{\partial \,\omega_q(\tau)}{\partial s_2}-({p-1})q\,\omega_q(\tau)^{q-1}\frac{\partial \,\omega_q(\tau)}{\partial s_2}}\bigg]\Big({t^{p-q}-\frac{s_1}{s_2}}\Big)\nonumber\vsal
 &\qquad +q\,\Big({p\,\omega_q(\tau)^{q-1}-({p-1})\,\omega_q(\tau)^{q}}\Big)\bigg[{({p-q})\,t^{p-q-1}\frac{\partial t}{\partial s_2}+\frac{s_1}{s_2^2}}\bigg]\,.\label{eq:2p5}
\end{align}
We now evaluate $\frac{\partial \,\omega_q(\tau)}{\partial s_2}$. For this purpose we will need the partial derivative 
\begin{align}
\frac{\partial\tau}{\partial s_2}&=\frac{\partial}{\partial s_2}\bigg({\frac{p-q}{p}\,\frac{t^p-s_1}{t^{p-q}-\frac{s_1}{s_2}}}\bigg)
\nonumber\vsal
 &=\frac{p-q}{p}\,({t^p-s_1})\,\frac{1}{\big({t^{p-q}-\frac{s_1}{s_2}}\big)^2}\,({-1})\,\bigg({({p-q})\,t^{p-q-1}\frac{\partial\,t}{\partial s_2}+\frac{s_1}{s_2^2}}\bigg)\,+\nonumber\vsal
&\hspace{5.50cm}+\frac{p-q}{p}\,p\,t^{p-1}\frac{\partial\,t}{\partial s_2}\,\frac{1}{t^{p-q}-\frac{s_1}{s_2}}\,.\label{eq:2p6}
\end{align}
Then also  
\begin{align}
\frac{\partial }{\partial s_2}\,\omega_q(\tau)=\omega'_q(\tau)\cdot\frac{\partial\,\tau}{\partial s_2}=\frac{1}{q({q-1})\,\omega_q(\tau)^{q-2}\big({1-\,\omega_q(\tau)}\big)}\cdot\frac{\partial\,\tau}{\partial s_2}\,.\label{eq:2p7}
\end{align}
By \eqref{eq:2p5}, \eqref{eq:2p6} and \eqref{eq:2p7} we get:

\begin{align*}
\frac{\partial \Gamma}{\partial s_2}&=\bigg[q\,p\,(q-1)\,\omega_q(\tau)^{q-2}\frac{1}{q({q-1})\,\omega_q(\tau)^{q-2}\big({1-\omega_q(\tau)}\big)}\cdot\frac{\partial\,\tau}{\partial s_2}\,-\vsal
 &-\,q^2\,(p-1)\,\omega_q(\tau)^{q-1}\frac{1}{q({q-1})\,\omega_q(\tau)^{q-2}\big({1-\,\omega_q(\tau)\big)}}\cdot\frac{\partial\,\tau}{\partial s_2}\bigg]\Big({t^{p-q}-\frac{s_1}{s_2}}\Big)\,+\vsal
 &\hspace{2.0cm}+q\Big({p\,\omega_q(\tau)^{q-1}-({p-1})\,\omega_q(\tau)^{q}}\Big)\bigg({({p-q})\,t^{p-q-1}\frac{\partial\,t}{\partial s_2}+\frac{s_1}{s_2^2}}\bigg)\vsal
 &=\bigg({\frac{\partial\,\tau}{\partial s_2}\,\frac{p}{1-\omega_q(\tau)}-\frac{\partial\,\tau}{\partial s_2}\,\frac{q(p-1)}{q-1}\,\frac{\omega_q(\tau)}{1-\omega_q(\tau)}}\bigg)\Big({t^{p-q}-\frac{s_1}{s_2}}\Big)\,+\vsal
 &\hspace{2.0cm} +q\Big({p\,\omega_q(\tau)^{q-1}-({p-1})\,\omega_q(\tau)^{q}}\Big)\bigg({({p-q})\,t^{p-q-1}\frac{\partial\,t}{\partial s_2}+\frac{s_1}{s_2^2}}\bigg)
  \end{align*}

\begin{align*}
 &=\bigg({\frac{p}{1-\omega_q(\tau)}-\frac{q(p-1)}{q-1}\,\frac{\omega_q(\tau)}{1-\omega_q(\tau)}}\bigg)\Big({t^{p-q}-\frac{s_1}{s_2}}\Big)\Bigg\{\frac{p-q}{p}\,({t^p-s_1})\,\cdot\vsal
 &\qquad \frac{1}{\big({t^{p-q}-\frac{s_1}{s_2}}\big)^2}\,({-1})\bigg({(p-q)\,t^{p-q-1}\frac{\partial\,t}{\partial s_2}+\frac{s_1}{s_2^2}}\bigg)+({p-q})\,t^{p-1}\frac{\partial\,t}{\partial s_2}\,\frac{1}{t^{p-q}-\frac{s_1}{s_2}} \Bigg\}\vsal
 &\hspace{2.0cm} +q\Big({p\,\omega_q(\tau)^{q-1}-({p-1})\,\omega_q(\tau)^{q}}\Big)\bigg({({p-q})\,t^{p-q-1}\frac{\partial\,t}{\partial s_2}+\frac{s_1}{s_2^2}}\bigg)\vsal
 &=\bigg({\frac{p}{1-\omega_q(\tau)}-\frac{q(p-1)}{q-1}\,\frac{\omega_q(\tau)}{1-\omega_q(\tau)}}\bigg)\Bigg[({-\tau})\bigg({({p-q})\,t^{p-q-1}\frac{\partial\,t}{\partial s_2}+\frac{s_1}{s_2^2}}\bigg)\,+\vsal
 &\qquad+ ({p-q})\,t^{p-1}\frac{\partial\,t}{\partial s_2}\Bigg]+q\Big({p\,\omega_q(\tau)^{q-1}-({p-1})\,\omega_q(\tau)^{q}}\Big)\,\cdot\vsal
 &\hspace{7.0cm}\bigg({({p-q})\,t^{p-q-1}\frac{\partial\,t}{\partial s_2}+\frac{s_1}{s_2^2}}\bigg)\quad\Rightarrow
\end{align*}
\begin{align}
\frac{\partial \Gamma}{\partial s_2}&= \frac{\partial t}{\partial s_2}\Bigg[-({p-q})\,t^{p-q-1}\cdot F({\tau})+(p-q)\,t^{p-1}\frac{1}{1-\omega_q(\tau)}\,\cdot\nonumber\vsal
&\hspace{4.0cm}\bigg({p-\frac{(p-1)\,q}{q-1}\,\omega_q(\tau)}\bigg)\Bigg]-\frac{s_1}{s_2^2}\cdot F(\tau)\,,\label{eq:2p8}
\end{align}
where $F(\tau)$ is defined by 
\begin{align}
F({\tau})&=\frac{\tau}{1-\omega_q(\tau)}\bigg({p-\frac{q\,(p-1)}{q-1}\,\omega_q(\tau)}\bigg)\,-\nonumber\vsal
&\hspace{4.5cm}-q\Big({p\,\omega_q(\tau)^{q-1}-({p-1})\,\omega_q(\tau)^{q}}\Big)\,.\label{eq:2p9}
\end{align}

Thus since $\Gamma(s_1,s_2)=\Delta(s_1,s_2)$, we get by differentiating on $s_2$, and using \eqref{eq:2p8}, that the following identity is true:
\begin{align}
\frac{\partial\,t}{\partial s_2}\,({p-q})\,t^{p-q-1}\,\Delta_1&=\frac{s_1}{s_2^2}\, F(\tau)\,-\nonumber\vsal
&-({p-q})\,\frac{s_1}{s_2}\bigg({\frac{\omega_q(s_2)}{(q-1)\big({\omega_q(s_2)-1}\big)}+\frac{\omega_q(s_2)^q}{s_2}}\bigg)\,,\label{eq:2p10}
\end{align}
where $\Delta_1=\Delta_1(s_1,s_2)$ is defined by 

\begin{align}
\Delta_1&:=-F({\tau})+t^q\frac{1}{1-\omega_q(\tau)}\bigg({p-\frac{q\,(p-1)}{q-1}\,\omega_q(\tau)}\bigg)\nonumber\vsal
&\stackrel{\text{by}\,\eqref{eq:2p9}}{=\!=\!=\!=}\frac{-\tau}{1-\omega_q(\tau)}\bigg({p-\frac{q\,(p-1)}{q-1}\,\omega_q(\tau)}\bigg)+q\Big({p\,\omega_q(\tau)^{q-1}-({p-1})\,\omega_q(\tau)^{q}}\Big)+\nonumber\vsal
&\hspace{5.0cm}+ t^{q}\,\frac{1}{1-\omega_q(\tau)}\bigg({p-\frac{q\,(p-1)}{q-1}\,\omega_q(\tau)}\bigg)\quad\Rightarrow\nonumber\vsal
\Delta_1&:=q\big({p-(p-1)\omega_q(\tau)}\big)\omega_q(\tau)^{q-1}+\bigg({p-\frac{q\,(p-1)}{q-1}\omega_q(\tau)}\bigg)\frac{\tau-t^q}{\omega_q(\tau)-1}.\label{eq:2p11}
\end{align}

We prove that the quantity $\Delta_1$ satisfies: $\Delta_1>0$. For this purpose we evaluate 
\begin{align}
\tau-t^q&=\frac{p-q}{p}\,\frac{t^p-s_1}{t^{p-q}-\frac{s_1}{s_2}}-t^q\nonumber\vsal
        &=\frac{(p-q)({t^p-s_1})}{p\big({t^{p-q}-\frac{s_1}{s_2}}\big)}-t^q\nonumber\vsal
        &=\frac{(p-q)({t^p-s_1})-p\,t^q\big({t^{p-q}-\frac{s_1}{s_2}}\big)}{p\big({t^{p-q}-\frac{s_1}{s_2}}\big)}\nonumber\vsal
&=\frac{p\,t^p-q\,t^p-(p-q)s_1-p\,t^{p}+p\,t^q\frac{s_1}{s_2}}{p\big({t^{p-q}-\frac{s_1}{s_2}}\big)}\quad\Rightarrow\nonumber\vsal
\tau-t^q&=\frac{p\,\frac{s_1}{s_2}\,t^q- q\,t^p-(p-q)s_1}{p\big({t^{p-q}-\frac{s_1}{s_2}}\big)}\,.\label{eq:2p12}\end{align}
Thus by \eqref{eq:2p11} and \eqref{eq:2p12} we obtain (by using also the definition of $t=t(s_1,s_2)$ in the domain of interest)
\begin{align}
\Delta_1&=\frac{({p-q})s_1\,\alpha({s_2})}{t^{p-q}-\frac{s_1}{s_2}}+\frac{1}{\omega_q(\tau)-1}\bigg({p-\frac{q\,(p-1)}{q-1}\,\omega_q(\tau)}\bigg)\,\cdot\nonumber\vsal
&\hspace{6.0cm}\frac{p\,\frac{s_1}{s_2}\,t^q- q\,t^p-(p-q)s_1}{p\,(t^{p-q}-\frac{s_1}{s_2})}\quad\Rightarrow\nonumber\vsal
\Delta_1&\cong ({p-q})s_1\,\alpha({s_2})-\bigg({\frac{q\,(p-1)}{q-1}\,\omega_q(\tau)-p}\bigg)\frac{1}{\omega_q(\tau)-1}\,E_1(s_1,s_2)\,,\label{eq:2p13}
\end{align}
where $E_1(s_1,s_2)$ is defined by 
\begin{align}
E_1(s_1,s_2)=p\,\frac{s_1}{s_2}\,t^q- q\,t^p-(p-q)s_1\,.\label{eq:2p14}
\end{align}
Moreover $\frac{q\,(p-1)}{q-1}\,\omega_q(\tau)-p>\frac{q\,(p-1)}{q-1}-p=\frac{pq-q-pq+p}{q-1}=\frac{p-q}{q-1}>0$. We now show that $E_1(s_1,s_2)<0$, $\forall\,(s_1,s_2)\in D$, implying that $\Delta_1(s_1,s_2)>0$, $\forall\,(s_1,s_2)\in D$.  We just need to show that 
\begin{align*}
- q\,t^p+p\,\frac{s_1}{s_2}\,t^q-(p-q)s_1<0\,,\quad \forall\,(s_1,s_2)\in D\,.
\end{align*}
Obviously $1\leq t=t(s_1,s_2)\leq \omega_p(s_1)<\frac{p}{p-1}$. Define the function $\varphi(u)=- q\,u^p+p\,\frac{s_1}{s_2}\,u^q-(p-q)s_1$\,, \; $u\in \big[{1,\frac{p}{p-1}}\big)$. Then $\varphi'(u)=- q\,p\,u^{p-1}+p\,q\,\frac{s_1}{s_2}\,u^{q-1}\cong -\big({u^{p-q}-\frac{s_1}{s_2}}\big)<0$, since $u\geq 1>\frac{s_1}{s_2}$. Thus $\varphi(u)\leq \varphi(1)=-q+p\,\frac{s_1}{s_2}-({p-q})\,s_1$,  $\forall\,u\in \big[{1,\frac{p}{p-1}}\big)$. 

We now prove that $p\,\frac{s_1}{s_2}<q+({p-q})\,s_1$, for every $(s_1,s_2)$ that satisfies $0<s_1^{q-1}\leq s_2^{p-1}<1$. For this proof we just need to justify  that 
\begin{align}
p\,\frac{s_1}{s_1^{\frac{q-1}{p-1}}}<q+({p-q})\,s_1\,,\quad\forall\,s_1\in({0,1})\,,\label{eq:2p15}
\end{align}
or equivalently $p\,s_1^{\frac{p-q}{p-1}}-({p-q})\,s_1-q<0\,,\;\forall\,s_1\in({0,1})$. Define $g({s_1}):=p\,s_1^{\frac{p-q}{p-1}}-({p-q})\,s_1-q\,,\;\forall\,s_1\in({0,1})$. Then $g'({s_1})=(p-q)\,\frac{p}{p-1}\,\frac{1}{s_1^{\frac{q-1}{p-1}}}-({p-q})\cong \frac{p}{p-1}\,\frac{1}{s_1^{\frac{q-1}{p-1}}}-1>\frac{p}{p-1}-1=\frac{1}{p-1}>0$. Thus $\forall\,s_1\in({0,1})$, $g(s_1)<g(1)=p-(p-q)-q=0$, and the claim is proved. 

We prove now the following 

\begin{lemma}\label{lem:2p1}
Let $(s_1,s_2)$ satisfying $s_2=s'_2=H_q({\omega_p(s_1)})$. Then if we define $\tau'=\tau({s_1,s'_2})=\frac{p-q}{p}\,\frac{t^p-s_1}{t^{p-q}-\frac{s_1}{s'_2}}$, then we have that $\tau'=H_q({\omega_p(s_1)})$.
\end{lemma}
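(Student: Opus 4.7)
The plan is to show directly that $t(s_1,s'_2)=\omega_p(s_1)$; once this is established, a one-line substitution into the definition of $\tau$ will give $\tau'=H_q(\omega_p(s_1))$. The motivation for guessing $t=\omega_p(s_1)$ is twofold: the paper has already noted (in the proof that $\Delta_1>0$) that $t(s_1,s_2)\le \omega_p(s_1)$ always, so at $s_2=s'_2$ the natural candidate is the upper endpoint; and the factor $\omega-1$ will cancel between numerator and denominator of $\tau'$, producing a very clean expression.

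Set $\omega:=\omega_p(s_1)$, so by the definition of $\omega_p$ one has $s_1=H_p(\omega)=\omega^{p-1}\bigl(p-(p-1)\omega\bigr)$, while by hypothesis $s'_2=H_q(\omega)=\omega^{q-1}\bigl(q-(q-1)\omega\bigr)$. Dividing,
\[
\frac{s_1}{s'_2}=\omega^{p-q}\,\frac{p-(p-1)\omega}{q-(q-1)\omega}.
\]
With $t=\omega$ a direct computation gives $\omega^p-s_1=p\omega^{p-1}(\omega-1)$ and
\[
\omega^{p-q}-\frac{s_1}{s'_2}=\omega^{p-q}\,\frac{(p-q)(\omega-1)}{q-(q-1)\omega},
\]
so that the $(\omega-1)$ factors cancel and
\[
\tau'=\frac{p-q}{p}\,\frac{\omega^p-s_1}{\omega^{p-q}-\frac{s_1}{s'_2}}=\omega^{q-1}\bigl(q-(q-1)\omega\bigr)=H_q(\omega)=s'_2,
\]
which in particular yields $\omega_q(\tau')=\omega$.

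It remains to verify that the choice $t=\omega$ is legitimate, i.e.\ that it solves the defining equation of case a) at the point $(s_1,s'_2)$. Using $\omega_q(\tau')=\omega$ together with the identity for $\omega^{p-q}-s_1/s'_2$ above, the left-hand side of that equation becomes
\[
q\,\omega^{q-1}\bigl(p-(p-1)\omega\bigr)\cdot\omega^{p-q}\,\frac{(p-q)(\omega-1)}{q-(q-1)\omega}=\frac{q(p-q)(\omega-1)\,\omega^{p-1}\bigl(p-(p-1)\omega\bigr)}{q-(q-1)\omega}.
\]
On the right-hand side, $\alpha(s'_2)=\omega^q/H_q(\omega)-1=q(\omega-1)/(q-(q-1)\omega)$ and $s_1=\omega^{p-1}\bigl(p-(p-1)\omega\bigr)$ combine to produce exactly the same expression. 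Since the hypothesis $s'_2>h^{-1}(1/s_1^q)$ places us in case a), where $t(s_1,s'_2)$ is characterized as the unique solution in $[1,t(0)]$ of that equation, we conclude $t(s_1,s'_2)=\omega_p(s_1)$, and the displayed computation of $\tau'$ finishes the proof.

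The main obstacle is not any single step but rather finding the guess $t=\omega_p(s_1)$; after that, everything reduces to careful tracking of the common factor $(\omega-1)$. A minor verification still to be checked is that $\omega\in[1,t(0)]$ so that uniqueness in case a) applies, but this follows from the general upper bound $t(s_1,s_2)\le\omega_p(s_1)$ already used in the derivation of $\Delta_1>0$ above.
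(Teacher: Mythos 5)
Your proof is correct and follows essentially the same route as the paper: first establish that $t(s_1,s'_2)=\omega_p(s_1)$, then substitute into the formula for $\tau$. The paper obtains $t(s_1,s'_2)=\omega_p(s_1)$ by citing Lemma 2.3 of reference [13], whereas you re-derive it by plugging $t=\omega_p(s_1)$ (together with $\omega_q(\tau')=\omega_p(s_1)$) into the case-(a) defining equation and checking both sides match; that algebra is right. One small caveat: your closing justification that $\omega_p(s_1)\in[1,t(0)]$ ``follows from $t(s_1,s_2)\le\omega_p(s_1)$'' runs the wrong way --- that inequality bounds $t$ above by $\omega_p(s_1)$, but what uniqueness requires is $\omega_p(s_1)\le t(0)$, which is a separate fact (it is precisely what the cited Lemma 2.3 of [13] settles, and why the paper defers to it rather than re-verifying).
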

\begin{proof}
We obviously have that $\omega_p(s_1)=\omega_q(s_2)$. Thus since then $t(s_1,s_2)$ equals $\omega_p(s_1)$ (by the results in \cite{13} - Lemma 2.3) we should have that the following equalities are true 
\begin{align}
\tau'=\frac{p-q}{p}\,\frac{\omega_p(s_1)^p-s_1}{\omega_p(s_1)^{p-q}-\frac{s_1}{H_q({\omega_p(s_1)})}}=\frac{p-q}{p}\,\frac{\lambda^p-H_p({\lambda})}{\lambda^{p-q}-\frac{H_p(\lambda)}{H_q({\lambda})}}\,,\label{eq:2p16}
\end{align}
where we have set $\lambda=\omega_p(s_1)\in\big[{1,\frac{p}{p-1}}\big)\;\Leftrightarrow\; s_1=H_p({\lambda})$. We just need to prove that 
\begin{align*}
\tau'&=H_q({\lambda})&&\stackrel{\text{by}\,\eqref{eq:2p16}}{\Longleftarrow\!\Longrightarrow}\vsal
\frac{p-q}{p}\,\frac{\lambda^p-H_p({\lambda})}{\lambda^{p-q}-\frac{H_p(\lambda)}{H_q({\lambda})}}&=H_q({\lambda})&&\Longleftrightarrow\vsal
\frac{p-q}{p}\,\big({\lambda^p-H_p({\lambda})}\big)&=\lambda^{p-q}H_q({\lambda})-H_p({\lambda})&&\Longleftrightarrow\vsal
\frac{p-q}{p}\,\big({\lambda^p-p\lambda^{p-1}+(p-1)\lambda^p}\big)&=\lambda^{p-q}\big({q\lambda^{q-1}-({q-1})\lambda^q}\big)\,-\vsal
&\hspace{1.5cm}-\big({p\lambda^{p-1}-(p-1)\lambda^{p}}\big)&&\Longleftrightarrow
\end{align*}
\begin{align*}
\frac{p-q}{p}\,\big({p\lambda^p-p\lambda^{p-1}}\big)&=q\lambda^{p-1}-({q-1})\lambda^p-p\lambda^{p-1}+(p-1)\lambda^p&&\Longleftrightarrow\vsal
\frac{p-q}{p}\,p\lambda^{p-1}(\lambda-1)&=(p-q)\lambda^{p}-(p-q)\lambda^{p-1}\,,
\end{align*}
which is obviously true. Thus Lemma \ref{lem:2p1} is proved.
\end{proof}

We now prove the following:

\begin{lemma}\label{lem:2p2}
Let $(s_1,s_2)$ satisfy $s_2=H_q({\omega_p(s_1)})=:s'_2$. Then 
\begin{align*}
F({\tau'})=(p-q)\,s'_2\bigg({\frac{\omega_q(s'_2)}{(q-1)\big({\omega_q(s'_2)-1}\big)}+\frac{\omega_q(s'_2)^q}{s'_2}}\bigg)
\end{align*}
and thus for this choice of $(s_1,s_2)$ we get that the right hand side of \eqref{eq:2p10} vanishes.
\end{lemma}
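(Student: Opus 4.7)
My strategy is to use Lemma \ref{lem:2p1} to reduce both sides of the claimed identity to expressions in the single variable $\lambda := \omega_p(s_1)$, then verify the resulting equality as a short algebraic identity in $\lambda$, and finally substitute the formula back into \eqref{eq:2p10} for the ``thus'' conclusion. Since $\omega_q = H_q^{-1}$, the definition $s'_2 = H_q(\omega_p(s_1))$ gives $\omega_q(s'_2) = \omega_p(s_1)$; Lemma \ref{lem:2p1} then yields $\tau' = H_q(\omega_p(s_1)) = s'_2$, so also $\omega_q(\tau') = \omega_q(s'_2) = \omega_p(s_1) =: \lambda$. With these substitutions (and using $\omega_q(s'_2)^q/s'_2 = \lambda^q/H_q(\lambda)$), the claim becomes
\[
\frac{H_q(\lambda)}{1-\lambda}\left(p - \tfrac{q(p-1)}{q-1}\lambda\right) - q\big(p\lambda^{q-1} - (p-1)\lambda^q\big) = (p-q)\left(\tfrac{H_q(\lambda)\lambda}{(q-1)(\lambda-1)} + \lambda^q\right).
\]

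\emph{Verification of the identity.} Using $1/(\lambda-1) = -1/(1-\lambda)$, I move the $H_q(\lambda)$ term on the right to the left and multiply through by $(q-1)(1-\lambda)$. The coefficient multiplying $H_q(\lambda)$ on the left then collapses to $p(q-1) - q(p-1)\lambda + (p-q)\lambda$, which simplifies to $p(q-1)(1-\lambda)$ because $q(p-1) - (p-q) = p(q-1)$. Hence the left-hand side reduces to $p\,H_q(\lambda)$. The right-hand side after the same multiplication becomes $q(q-1)(1-\lambda)(p\lambda^{q-1} - (p-1)\lambda^q) + (p-q)(q-1)(1-\lambda)\lambda^q$; factoring out $(q-1)(1-\lambda)$, the remaining bracket is $q(p\lambda^{q-1} - (p-1)\lambda^q) + (p-q)\lambda^q = pq\lambda^{q-1} - p(q-1)\lambda^q = p\,H_q(\lambda)$. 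Both sides therefore coincide, yielding the claimed formula for $F(\tau')$.

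\emph{The ``thus'' conclusion.} Substituting $s_2 = s'_2$ into the right-hand side of \eqref{eq:2p10}, we obtain
\[
\frac{s_1}{(s'_2)^2} F(\tau') - (p-q)\frac{s_1}{s'_2}\left(\tfrac{\omega_q(s'_2)}{(q-1)(\omega_q(s'_2)-1)} + \tfrac{\omega_q(s'_2)^q}{s'_2}\right),
\]
and by the formula just proved for $F(\tau')$ the two terms cancel exactly. The only non-routine step is the polynomial identity in the middle: the apparent singularity at $\lambda = 1$ arising from the factors $1/(1-\lambda)$ is removed by the clean cancellation $p(q-1) - q(p-1)\lambda + (p-q)\lambda = p(q-1)(1-\lambda)$, after which both sides collapse to $pH_q(\lambda)$ and the proof is immediate.
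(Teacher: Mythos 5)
Your proof is correct and takes essentially the same route as the paper: both invoke Lemma \ref{lem:2p1} to set $\omega_q(\tau') = \omega_p(s_1) =: \lambda$ and then reduce the claim to the same polynomial identity in $\lambda$ (the paper's \eqref{eq:2p17}). Where the paper stops after dividing through by $\lambda^{q-1}$ and asserting \eqref{eq:2p18} is ``easily seen to be true,'' you carry out the remaining algebra explicitly, grouping the two $H_q(\lambda)$ terms, observing the cancellation $p(q-1) - q(p-1)\lambda + (p-q)\lambda = p(q-1)(1-\lambda)$, and checking that both sides collapse to $p\,H_q(\lambda)$ -- a clean and complete verification of the step the paper leaves to the reader.
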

\begin{proof}
We have 
\begin{align*}
F({\tau'})&\stackrel{by\,\eqref{eq:2p9}}{=\!=\!=\!=}\frac{\tau'}{1-\omega_q({\tau'})}\bigg({p-\frac{q(p-1)}{q-1}\,\omega_q({\tau'})}\bigg)\,-\vsal
&\hspace{5.0cm}-q\Big({p\,\omega_q({\tau'})^{q-1}-({p-1})\,\omega_q({\tau'})^q}\Big)\,.
\end{align*}
Since by Lemma \ref{lem:2p1} $\omega_q({\tau'})=\omega_p({s_1})=:\lambda$, for the proof of the present Lemma we just need to show that:
\begin{align}
\frac{H_q({\lambda})}{1-\lambda}\bigg({p-\frac{q(p-1)}{q-1}\,\lambda}\bigg)&-q\big({p\lambda^{q-1}-(p-1)\lambda^q}\big)=\nonumber\vsal
&(p-q)\,H_q(\lambda)\bigg({\frac{\lambda}{(q-1)(\lambda-1)}+\frac{\lambda^q}{H_q(\lambda)}}\bigg)\,.\label{eq:2p17}
\end{align}
Then
\begin{align}
&\hspace{9.5cm}\eqref{eq:2p17} &&\Leftrightarrow\nonumber\vsal
&\frac{q\lambda^{q-1}-(q-1)\lambda^q}{1-\lambda}\bigg({p-\frac{q(p-1)}{q-1}\,\lambda}\bigg)-q\big({p\lambda^{q-1}-(p-1)\lambda^q}\big)=\nonumber\vsal
&\hspace{1,5cm}(p-q)\big({q\lambda^{q-1}-(q-1)\lambda^q}\big)\bigg({\frac{\lambda}{(q-1)(\lambda-1)}+\frac{\lambda}{q-(q-1)\lambda}}\bigg)&&\Leftrightarrow\nonumber\vsal
&\frac{q-(q-1)\lambda}{1-\lambda}
\bigg({p-\frac{q(p-1)}{q-1}\,\lambda}\bigg)-q\big({p-(p-1)\lambda}\big)= \nonumber\vsal
&\hspace{2.0cm}(p-q)\big({q-(q-1)\lambda}\big)\bigg({\frac{1}{(q-1)(\lambda-1)}+\frac{1}{q-(q-1)\lambda}}\bigg)\lambda\,.\label{eq:2p18}
\end{align}
Now \eqref{eq:2p18} is easily seen to be true, after doing some simple calculations. Lemma \ref{lem:2p2} is now proved.
\end{proof}
Finally we prove the following 

\begin{lemma}\label{lem:2p3}
Define the function $A$, by 
\begin{align*}
	A({s_2})=s_2\bigg({\frac{\omega_q(s_2)}{(q-1)\big({\omega_q(s_2)-1}\big)}+\frac{\omega_q(s_2)^q}{s_2}}\bigg)\,,\quad s_2\in({0,1})\,.
\end{align*}
Then $A({s_2})$ is strictly increasing.
\end{lemma}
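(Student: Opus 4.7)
The plan is to pass to the variable $\lambda=\omega_q(s_2)$, under which $s_2=H_q(\lambda)=q\lambda^{q-1}-(q-1)\lambda^q$ and $\lambda$ ranges over $(1,\tfrac{q}{q-1})$ as $s_2$ runs over $(0,1)$. Since $H_q'(z)=q(q-1)z^{q-2}(1-z)<0$ for $z\in(1,\tfrac{q}{q-1})$, the map $\omega_q$ is strictly decreasing on $(0,1)$, so it suffices to exhibit $A$ as a strictly decreasing function of $\lambda$.

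Substituting $s_2=\lambda^{q-1}\bigl(q-(q-1)\lambda\bigr)$ in the first summand of $A(s_2)$ gives
\[
  \frac{s_2\,\lambda}{(q-1)(\lambda-1)}=\frac{\lambda^q\bigl(q-(q-1)\lambda\bigr)}{(q-1)(\lambda-1)}.
\]
Adding the second summand $\omega_q(s_2)^q=\lambda^q$ and combining over the common denominator $(q-1)(\lambda-1)$, the bracket in the numerator collapses via
\[
  \bigl(q-(q-1)\lambda\bigr)+(q-1)(\lambda-1)=1,
\]
yielding the clean identity
\[
  A(s_2)=\frac{\lambda^q}{(q-1)(\lambda-1)}=:g(\lambda),\qquad \lambda=\omega_q(s_2).
\]

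A direct differentiation then gives
\[
  g'(\lambda)=\frac{\lambda^{q-1}\bigl[(q-1)\lambda-q\bigr]}{(q-1)(\lambda-1)^2},
\]
which is strictly negative on $(1,\tfrac{q}{q-1})$ because $(q-1)\lambda-q<0$ there. Hence $g$ is strictly decreasing on that interval, and the composition $A=g\circ\omega_q$ of two strictly decreasing maps is strictly increasing on $(0,1)$, as claimed. I do not expect any genuine obstacle: the only step requiring care is spotting the cancellation that makes the bracket equal to $1$, after which the sign analysis of $g'$ is elementary.
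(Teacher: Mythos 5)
Your proof is correct and follows essentially the same route as the paper: change variables to $\lambda=\omega_q(s_2)$, observe that $\omega_q$ is strictly decreasing, and show the resulting function of $\lambda$ is strictly decreasing. The only difference is cosmetic — you simplify $A$ to the compact form $\lambda^q/\bigl((q-1)(\lambda-1)\bigr)$ before differentiating, whereas the paper differentiates the unsimplified expression and lets the corresponding terms cancel in $A_1'(\gamma)$; both yield the same sign conclusion.
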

\begin{proof}
For the proof we define $\omega_q(s_2)=\gamma\in\big({1,\frac{q}{q-1}}\big)\;\Leftrightarrow\; s_2=H_q(\gamma)$. We need to prove that the function 
\begin{align*}
	A_1({\gamma})=H_q(\gamma)\bigg({\frac{\gamma}{(q-1)(\gamma-1)}+\frac{\gamma^q}{H_q(\gamma)}}\bigg)\,,
\end{align*}
is strictly decreasing. This is enough, since $H_q(s_2)$ is strictly decreasing function of $s_2\in({0,1})$. We have 
\begin{align*}
A_1({\gamma})&=H_q(\gamma)\,\frac{\gamma}{(q-1)(\gamma-1)}+\gamma^q\hspace{2.0cm}\Rightarrow\vsal
A'_1({\gamma})&=q\gamma^{q-1}+q(q-1)\gamma^{q-2}({1-\gamma})\,\frac{\gamma}{(q-1)(\gamma-1)}\,+\vsal
&\hspace{1.0cm}+\big({q\gamma^{q-1}-({q-1})\gamma^q}\big)\,\frac{1}{q-1}\Big({-\frac{1}{({\gamma-1})^2}}\Big)<0\,,\quad \forall\,\gamma\in\big({1,\tfrac{q}{q-1}}\big)\,.
\end{align*}
Lemma \ref{lem:2p3}, is now proved.
\end{proof}

We proceed to prove \eqref{eq:2p1}.

\noindent i) Suppose that $s_2\in({s'_2,1})$ where $s'_2=H_q({\omega_p(s_1)})$. We prove that $\frac{\partial t}{\partial s_2}(s_1,s_2)<0$, for each such $s_2$. It is enough to prove, because of  \eqref{eq:2p10}, that $\forall\,s_2\in({s'_2,1})$, \; $F(\tau)<(p-q)\,A(s_2)$, where $A(s_2)$ is defined in Lemma \ref{lem:2p3}. 

At this part we shall need the following:

\begin{lemma}\label{lem:2p4}
The function $F(\tau)$, (defined above by \eqref{eq:2p9}) 
\begin{align*}
	F({\tau})&=\frac{\tau}{1-\omega_q(\tau)}\bigg({p-\frac{q\,(p-1)}{q-1}\,\omega_q(\tau)}\bigg)-q\Big({p\,\omega_q(\tau)^{q-1}-({p-1})\,\omega_q(\tau)^{q}}\Big)
\end{align*}
is strictly increasing for $\tau\in(0,1)$\,.
\end{lemma}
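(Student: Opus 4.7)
The plan is to collapse $F(\tau)$ into a closed form in the single variable $\omega := \omega_q(\tau)$, after which the monotonicity claim becomes a one-line calculus exercise. The key observation is that the two terms defining $F(\tau)$ in \eqref{eq:2p9} exhibit massive cancellation once one substitutes $\tau = H_q(\omega) = \omega^{q-1}\bigl(q-(q-1)\omega\bigr)$.

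Concretely, I would combine the two summands in \eqref{eq:2p9} over the common denominator $1-\omega$ and expand the numerator $\tau\bigl(p - \tfrac{q(p-1)}{q-1}\omega\bigr) - q(1-\omega)\bigl(p\omega^{q-1} - (p-1)\omega^q\bigr)$. The $\omega^{q-1}$ coefficients cancel outright (both contribute $\pm pq\omega^{q-1}$), as do the $\omega^{q+1}$ coefficients (both contribute $\pm q(p-1)\omega^{q+1}$), while the remaining $\omega^q$ coefficient collapses to $(q-p)/(q-1)$. This yields the closed form
\[
F(\tau) \;=\; \frac{(p-q)\,\omega_q(\tau)^q}{(q-1)\bigl(\omega_q(\tau)-1\bigr)}.
\]

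Given this identity, the monotonicity is immediate. Set $g(\omega) := \omega^q/(\omega-1)$ on the interval $\omega\in\bigl(1,\tfrac{q}{q-1}\bigr)$. A direct differentiation gives
\[
g'(\omega) \;=\; \frac{\omega^{q-1}\bigl((q-1)\omega - q\bigr)}{(\omega-1)^2},
\]
which is strictly negative on the whole interval since $(q-1)\omega < q$ there. As $\omega_q$ is itself strictly decreasing on $(0,1)$ with image $\bigl(1,\tfrac{q}{q-1}\bigr)$, and the prefactor $(p-q)/(q-1)$ is positive, the function $F(\tau) = \tfrac{p-q}{q-1}\,g(\omega_q(\tau))$ is the composition of a decreasing function with a decreasing function, hence strictly increasing on $(0,1)$.

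The only genuine work is the algebraic collapse in the first step; once one targets the closed form, it is a matter of careful bookkeeping with three powers of $\omega$. The real obstacle is simply recognizing that such a collapse is possible at all — the unsimplified expression \eqref{eq:2p9} superficially suggests a far more delicate analysis than is actually required.
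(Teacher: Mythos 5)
Your closed form is correct, and it is worth verifying the collapse carefully: writing $\omega=\omega_q(\tau)$, $\tau = H_q(\omega)=\omega^{q-1}\bigl(q-(q-1)\omega\bigr)$, and combining over $1-\omega$, the $\omega^{q-1}$ and $\omega^{q+1}$ terms cancel and the $\omega^q$ coefficient reduces (over the denominator $q-1$) to $q-p$, giving
\[
F(\tau)=\frac{(p-q)\,\omega_q(\tau)^q}{(q-1)\bigl(\omega_q(\tau)-1\bigr)}.
\]
From there the rest is as you say: $\omega\mapsto\omega^q/(\omega-1)$ has derivative $\omega^{q-1}\bigl((q-1)\omega-q\bigr)/(\omega-1)^2<0$ on $\bigl(1,\tfrac{q}{q-1}\bigr)$, $\omega_q$ is strictly decreasing, and $(p-q)/(q-1)>0$, so $F$ is strictly increasing on $(0,1)$. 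A quick numerical check ($p=3$, $q=2$, $\tau=3/4$, $\omega=3/2$) gives $F=9/2$ on both sides of the identity, which is reassuring.

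Compared with the paper: the paper makes the same initial substitution $\gamma=\omega_q(\tau)$ and sets out to show that $F_1(\gamma)=F(H_q^{-1}(\gamma))$ is strictly decreasing, but then only records that $H_q'(\gamma)=q(q-1)\gamma^{q-2}(1-\gamma)<0$ and that the fractional factor $\bigl(\tfrac{q(p-1)}{q-1}\gamma-p\bigr)/(\gamma-1)$ has negative derivative. Writing $F_1=H_q\cdot\phi-q\psi$ with $\psi(\gamma)=p\gamma^{q-1}-(p-1)\gamma^q$, those two facts give $H_q'\phi<0$ and $H_q\phi'<0$, but $\psi'<0$ as well, so $-q\psi'>0$ and the sign of $F_1'$ does not follow from the displayed computations alone; some further cancellation (essentially the one you carry out) is needed. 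Your route makes that cancellation explicit, producing a one-line monotonicity argument, so it is both a genuinely different (closed-form) proof and arguably the cleaner one: it replaces a product-rule sign analysis, which as stated is not conclusive, with an exact identity that settles the matter at a glance.
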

\begin{proof}
It is enough (since $H_q(\gamma)$ is strictly decreasing function for $\gamma\in\big({1,\tfrac{q}{q-1}}\big)$), to prove that the function ( we set $\omega_q(\tau)=\gamma\;\Leftrightarrow\; \tau=H_q(\gamma)$ )
\begin{align*}
	F_1({\gamma})&=\frac{H_q(\gamma)}{1-\gamma}\bigg({p-\frac{q\,(p-1)}{q-1}\,\gamma}\bigg)-q\big({p\gamma^{q-1}-({p-1})\gamma^{q}}\big)
\end{align*}
is strictly decreasing for $\gamma\in\big({1,\tfrac{q}{q-1}}\big)$. For this purpose we calculate  $H_q'(\gamma)=q(q-1)\,\gamma^{q-2}(1-\gamma)$ and 
\begin{align*}
\Bigg({\frac{\frac{q\,(p-1)}{q-1}\,\gamma-p}{\gamma-1}}\Bigg)'_{\gamma}&=\frac{\frac{q\,(p-1)}{q-1}({\gamma-1})-\big({\frac{q\,(p-1)}{q-1}\,\gamma-p}\big)}{({\gamma-1})^2}\vsal
 &=\frac{1}{q-1}\frac{1}{({\gamma-1})^2}\big({q(p-1)\gamma-q(p-1)-q(p-1)\gamma+p(q-1)}\big)\vsal
&=-\frac{p-q}{q-1}\,\frac{1}{({\gamma-1})^2}<0\,,\qquad \forall\,\gamma\in\big({1,\tfrac{q}{q-1}}\big)\,.
\end{align*}
Lemma \ref{lem:2p4}, is now proved.
\end{proof}

Remember now that $\tau=\tau({s_1,s_2,t})=\frac{p-q}{p}\,\frac{t^p-s_1}{t^{p-q}-\frac{s_1}{s_2}}$, is strictly increasing function for $t\in[1,+\infty)$, as can be easily seen by differentiating with respect to $t$. Thus since $s'_2<s_2<1$, we get the following. Of course we remind that $t(s_1,s_2)\leq \omega_p(s_1)=t(s_2,s'_2)$ where the equality  just written is due to the fact that $s'_2=H_q({\omega_p(s_1)})$. Thus we have:
\begin{align*}
\tau=\tau({s_1,s_2,t})&=\frac{p-q}{p}\,\frac{t^p-s_1}{t^{p-q}-\frac{s_1}{s_2}}\leq \frac{p-q}{p}\,\frac{\omega_p(s_1)^p-s_1}{\omega_p(s_1)^{p-q}-\frac{s_1}{s_2}}\vsal
& < \frac{p-q}{p}\,\frac{t({s_1,s'_2})^p-s_1}{t({s_1,s'_2})^{p-q}-\frac{s_1}{s'_2}}\vsal
&=\tau({s_1,s'_2})=H_q\big({\omega_p(s_1)}\big)\,,
\end{align*}
where the last equality is due to Lemma \ref{lem:2p1}. Then since 
\begin{align*}
\tau<\tau({s_1,s'_2})&=\tau'=H_q\big({\omega_p(s_1)}\big)\qquad\stackrel{ \text{by\,Lemma}\,\ref{lem:2p4}}{=\!=\!=\!=\!=\!=\!\Longrightarrow}\vsal
F({\tau})&<F({\tau'})=(p-q)\,A(s'_2)\,,
\end{align*}
where  the equality above is justified by Lemma \ref{lem:2p2}. But $s'_2<s_2<1$, and by  Lemma \ref{lem:2p3}, $A(s_2)$ is strictly increasing function for $s_2\in({0,1})$. Thus we get for every $s_2\in({s'_2,1})$ that: $F({\tau})<(p-q)\,A(s_2)$ which is the desired result. Thus $\frac{\partial \,t}{\partial s_2}<0$,\;  $\forall\,s_2\in({s'_2,1})$.

\medskip 

\noindent ii) Suppose now that $s_2\in({s''_2,s'_2})$, where $s''_2:=\max\Big({s_1^{\frac{q-1}{p-1}},h^{-1}\big({\frac{1}{s_1}}\big)}\Big)$, \; $s'_2=H_q({\omega_p(s_1)})$. We prove that for each such $s_2$, we have $\frac{\partial \,t}{\partial s_2}({s_1,s_2})>0$. Because of \eqref{eq:2p10} it is enough to prove that $s_2\in({s''_2,s'_2})$ implies 
\begin{align}
F({\tau})>(p-q)\,A({s_2})=({p-q})s_2\Bigg({\frac{\omega_q({s_2})}{({q-1})\,\big({\omega_q({s_2})-1}\big)}+\frac{\omega_q({s_2})^q}{s_2}}\Bigg)\,.\label{eq:2p19}
\end{align}
Since $s''_2<s_2<s'_2$ we have, because of Lemmas \ref{lem:2p3}. and \ref{lem:2p2}:
\begin{align}
(p-q)\,A(s_2)<(p-q)\,A(s'_2)=F({\tau'})\,,\label{eq:2p20}
\end{align}
where $\tau'=H_q({\omega_p(s_1)})=s'_2$, by Lemma \ref{lem:2p1}. 

Thus if we prove that $\tau'<\tau$, then  \eqref{eq:2p20} and Lemma \ref{lem:2p4} will imply $(p-q)\,A(s_2)<F({\tau'})<F({\tau})$, which is \eqref{eq:2p19}, thus we get the desired result. We proceed to prove that 
\begin{align}
&\hspace{2.3cm}\tau'<\tau\,.\label{eq:2p21}\vsal
\eqref{eq:2p21}\quad&\Leftrightarrow\quad H_q({\omega_p(s_1)})<\tau=\tau({s_1,s_2,t})\nonumber\vsal
&\Leftrightarrow\quad\omega_p(s_1)>\omega_q(\tau)\,.\label{eq:2p22}
\end{align}
Define the function $\varphi({x})=p\,x^{q-1}-(p-1)\,x^q$\,,\; $x>1$. Then $\varphi'({x})\cong p\,({q-1})-(p-1)\,q\,x<-p+q<0$. Remember also that $s_1$, $s_2$, $t$, $\tau$ satisfy (by the definition of $t=t(s_1,s_2)$)
\begin{align*}
q\Big({p\,\omega_q(\tau)^{q-1}-({p-1})\,\omega_q(\tau)^{q}}\Big)=\frac{({p-q})s_1\,\alpha(s_2)}{t^{p-q}-\frac{s_1}{s_2}}\,.
\end{align*}
Thus by the above comments 
\begin{align}
\eqref{eq:2p22}\quad&\Leftrightarrow\quad q\,\varphi\big({\omega_p(s_1)}\big)< q\,\varphi\big({\omega_q(\tau)}\big)\nonumber\vsal
&\Leftrightarrow\quad \frac{({p-q})s_1\,\alpha(s_2)}{t^{p-q}-\frac{s_1}{s_2}}>q\Big({p\,\omega_p(s_1)^{q-1}-({p-1})\,\omega_p(s_1)^{q}}\Big)\nonumber\vsal
&\Leftrightarrow\quad \frac{({p-q})s_1\,\alpha(s_2)}{t^{p-q}-\frac{s_1}{s_2}}>\frac{q\,H_p\big({\omega_p(s_1)}\big)}{\omega_p(s_1)^{p-q}}=\frac{q\,s_1}{\omega_p(s_1)^{p-q}}\nonumber\vsal
&\Leftrightarrow\quad \frac{({p-q})\,\alpha(s_2)}{t^{p-q}-\frac{s_1}{s_2}}>\frac{q}{\omega_p(s_1)^{p-q}}\,.\label{eq:2p23}
\end{align}
Remember that  $\alpha({s_2})=\frac{\omega_q(s_2)^{q}}{s_2}-1$ and then since 
\begin{align*}
s_2<s'_2\quad\Rightarrow \quad \alpha({s_2})>\alpha({s'_2})&=\frac{\omega_q(s'_2)^{q}}{s'_2}-1=\frac{\omega_p(s_1)^{q}}{H_q\big({\omega_p(s_1)}\big)}-1\vsal
                          &=\frac{\omega_p(s_1)^{q}}{q\,\omega_p(s_1)^{q-1}-({q-1})\,\omega_p(s_1)^{q}}-1\vsal
                          &=\frac{\omega_p(s_1)}{q-(q-1)\,\omega_p(s_1)}-1\vsal
                          &=\frac{\omega_p(s_1)-q+(q-1)\,\omega_p(s_1)}{q-(q-1)\,\omega_p(s_1)}\vsal
                          &=\frac{q\big({\omega_p(s_1)-1}\big)}{q-(q-1)\,\omega_p(s_1)}\,.
\end{align*}
Thus in order to prove \eqref{eq:2p23}, we just need to prove that the following inequality is true:
\begin{align}
\frac{(p-q)\,\frac{q({\omega_p(s_1)-1})}{q-(q-1)\,\omega_p(s_1)}}{t^{p-q}-\frac{s_1}{s_2}}\geq \frac{q}{\omega_p(s_1)^{p-q}}\,.\label{eq:2p24}
\end{align}
But $t=t(s_1,s_2)\leq \omega_p(s_1)$, thus for the validity of \eqref{eq:2p24} it is enough to prove the inequality:
\begin{align}
(p-q)\,\frac{q\big({\omega_p(s_1)-1}\big)}{q-(q-1)\,\omega_p(s_1)}\geq q\bigg({\omega_p(s_1)^{p-q}-\frac{s_1}{s_2}}\bigg)\,\omega_p(s_1)^{-p+q}\,.\label{eq:2p25}
\end{align}
But $s_2<s'_2=H_q({\omega_p(s_1)})$, thus \eqref{eq:2p25} is true if 
\begin{align}
	(p-q)\,\frac{q\big({\omega_p(s_1)-1}\big)}{q-(q-1)\,\omega_p(s_1)}\geq q\bigg({\omega_p(s_1)^{p-q}-\frac{s_1}{H_q({\omega_p(s_1)})}}\bigg)\,\omega_p(s_1)^{-p+q}\,.\label{eq:2p26}
\end{align}
But as will immediately see, in fact have equality in  \eqref{eq:2p26}. Indeed, set $t=\omega_p(s_1)\in\big({1,\frac{p}{p-1}}\big)$. We need to prove, 
\begin{align*}
(p-q)\,\frac{t-1}{q-(q-1)\,t}&=\bigg({t^{p-q}-\frac{H_p(t)}{H_q(t)}}\bigg)\,t^{-p+q}&&\Leftrightarrow\vsal
(p-q)\,\frac{t-1}{q-(q-1)\,t}&=1-\frac{1}{t^{p-q}}\,\frac{p\,t^{p-1}-(p-1)\,t^p}{q\,t^{q-1}-(q-1)\,t^q}&&\Leftrightarrow\vsal
(p-q)\,\frac{t-1}{q-(q-1)\,t}&=1-\frac{1}{t^{p-q}}\,\frac{p\,t^{p-q}-(p-1)\,t^{p-q+1}}{q-(q-1)\,t}&&\Leftrightarrow\vsal
(p-q)(t-1)&=q-(q-1)\,t-p+(p-1)\,t&&\Leftrightarrow\vsal
(p-q)(t-1)&=-(p-q)+(p-q)\,t\,,
\end{align*}
which is obviously true. Relations \eqref{eq:2p1} are now proved. \qed

\newpage

\vspace{50pt}
\noindent Nikolidakis Eleftherios\\
Assistant Professor\\
Department of Mathematics \\
Panepistimioupolis, University of Ioannina, 45110\\
Greece\\
E-mail address: enikolid@uoi.gr

\end{document}